\newtheorem{thm}{Theorem}[section]
\newtheorem{lem}[thm]{Lemma}
\newtheorem{prop}[thm]{Proposition}
\newtheorem{cor}[thm]{Corollary}
\theoremstyle{definition}
\newtheorem{defn}[thm]{Definition}
\newtheorem{exam}[thm]{Example}
\theoremstyle{remark}
\newtheorem*{sol}{Solution}
\newcommand{\bZp}{\mathbb{Z}_+}
\newcommand{\bQp}{\mathbb{Q}_+}
\newcommand{\bRp}{\mathbb{R}_+}
\newcommand{\bN}{\mathbb{N}}
\numberwithin{equation}{section}
\title{The Operator norm on Weighted Discrete Semigroup Algebras $\ell^1(S, \omega)$}
\author{H. V. Dedania}
\address{Dept. of Mathematics, Sardar Patel University, Vallabh Vidyanagar 388120, Gujarat, India}
\email{hvdedania@gmail.com}
\author{J. G. Patel*}
\address{Dept. of Mathematics, Sardar Patel University, Vallabh Vidyanagar 388120, Gujarat, India}
\email{jatinpatel@spuvvn.edu}
\begin{document}

\subjclass[2010]{Primary 46H05; Secondary 43A20.}

\keywords{Semigroup, Weight, Operator norm, and Regular norm.}

\begin{abstract}
Let $\omega$ be a weight on a right cancellative semigroup $S$. Let $\|\cdot\|_{\omega}$ be the weighted norm on the weighted discrete semigroup algebra $\ell^1(S, \omega)$. In this paper, we prove that the weight $\omega$ satisfies F-property if and only if the operator norm $\| \cdot \|_{\omega op}$ of $\| \cdot \|_{\omega}$ is exactly equal to another weighted norm $\| \cdot \|_{\widetilde{\omega}_1}$ [Theorem \ref{17}($\ref{9}$)]. Though its proof is elementary, the result is unexpectedly surprising. In particular, $\| \cdot \|_{1 op}$ is same as $\| \cdot \|_1$ on $\ell^1(S)$. Moreover, various examples are discussed to understand the relating among $\| \cdot \|_{\omega op}$, $\| \cdot \|_{\omega}$, and $\ell^1(S, \omega)$.
\end{abstract}
\maketitle

\section{Introduction}
Let $( A , \| \cdot \|)$ be an associative, complex Banach algebra which is \emph{faithful}, i.e. if $a \in  A $ and $a x = 0 \; (x \in  A )$, then $a = 0$. Define the operator norm $\| \cdot \|_{op}$ on $ A $ as $$ \| a \|_{op} = \sup \{ \| a x\| : x \in  A , \| x \| \leq 1\} \quad (a \in  A ).$$ Then it is always true that $\| a \|_{op} \leq \| a \| \; (a \in  A )$. The norm $\| \cdot \|$ is called \emph{regular} if $\| \cdot \|_{op} = \| \cdot \|$ on $A$. There are several sufficient conditions for the regularity of the norm $\| \cdot \|$. For example, the norm $\| \cdot \|$ is regular if one of the following conditions holds : (i) $ A $ is unital; (ii) $ A $ has a bounded approximate identity with bound 1; (iii) $\| a^2 \| = \| a \|^2 \; (a \in  A )$; (iv) $ A $ is a $\ast $-algebra and $\|a^{\ast } a \| = \| a \|^2 \; (a \in  A )$ \cite{BD:95}. However, there is no necessary and sufficient condition for the regularity of $\| \cdot \|$. It would be clear from this paper that this property depends on both $\| \cdot \|$ and $ A $.

Let $S$ be a semigroup. A \emph{weight} on $S$ is a map $\omega : S \longrightarrow (0, \infty)$ satisfying the submultiplicativity $\omega(s t) \leq \omega(s) \omega(t) \; (s, t \in S)$. Consider the Banach space $$\ell^1(S, \omega) = \{f : S \longrightarrow \mathbb{C} : \| f \|_{\omega} = \sum\limits_{s \in S} |f(s)| \omega(s) < \infty\}.$$ For $f, g \in \ell^1(S, \omega)$, the \emph{convolution product} $f \ast g$ is defined as $$f \ast g(s) = \sum \{f(u) g(v): u,v \in S, uv = s\} \quad (s \in S).$$
If $uv = s$ has no solution, then $f \ast g(s) = 0$. Then $(\ell^1(S, \omega), \| \cdot \|_{\omega}, \ast)$ is a Banach algebra; it is called the \emph{weighted discrete semigroup algebra }\cite[P.159]{Dal:00}. If $S$ is right cancellative, then it is easy to see that $\ell^1(S, \omega)$ is faithful. In the past, $\ell^1(S, \omega)$ was used to serve as counter examples. Now it has been studied systematically. Various Banach algebra properties of $\ell^1(S, \omega)$ can be characterized in terms of easy objects $S$ and $\omega$ \cite{BDD:11,DaDe:09,HeZu:56}.

\section{Results on Operator Norms}
Throughout $S$ is a right cancellative semigroup. So the Banach algebra $\ell^1(S, \omega)$ is faithful, and hence the operator seminorm $\| \cdot \|_{\omega op}$ on $(\ell^1(S, \omega), \| \cdot \|_{\omega})$ is a norm.

\begin{defn}
Let $k \in \bN$, and let $\omega = \widetilde{\omega}_0$ be a weight on $S$. Define $$\widetilde{\omega}_k(s)= \sup \{\frac{\widetilde{\omega}_{k-1}(st)}{\widetilde{\omega}_{k-1}(t)} : t \in S\} \quad (s \in S).$$
\end{defn}

\begin{prop}\label{14}
Let $k \in \bZp$ and $\omega = \widetilde{\omega}_0$ be a weight on $S$. Then
\begin{enumerate}[{(i)}]
  \item Each $\widetilde{\omega}_k$ is a weight on $S$. \label{7}
  \item $\widetilde{\omega}_{k+1}(s) \leq \widetilde{\omega}_{k}(s) \; (s \in S)$. \label{15}
  \item $\ell^1(S, \widetilde{\omega}_{k-1}) \subset \ell^1(S, \widetilde{\omega}_{k})$.
  \item $\|\cdot\|_{\widetilde{\omega}_{k}}$ is an algebra norm on $\ell^1(S, \omega)$.
  \item $\lim\limits_{n \rightarrow \infty} \widetilde{\omega}_k(s^n)^{\frac{1}{n}} = \lim\limits_{n \rightarrow \infty} \widetilde{\omega}_{k+1}(s^n)^{\frac{1}{n}} \; (s \in S)$.
\end{enumerate}
\end{prop}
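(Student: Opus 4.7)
For (i) I would argue by induction on $k$. The base case $k=0$ is given. Assume $\widetilde{\omega}_{k-1}$ is a weight. Positivity of $\widetilde{\omega}_k$ is clear since every ratio in the sup is positive. Finiteness follows from submultiplicativity of $\widetilde{\omega}_{k-1}$, which forces $\widetilde{\omega}_{k-1}(st)/\widetilde{\omega}_{k-1}(t) \leq \widetilde{\omega}_{k-1}(s)$, so the sup is bounded above by $\widetilde{\omega}_{k-1}(s) < \infty$. For submultiplicativity, I would write the telescoping identity
\[
\frac{\widetilde{\omega}_{k-1}(stu)}{\widetilde{\omega}_{k-1}(u)} = \frac{\widetilde{\omega}_{k-1}(s(tu))}{\widetilde{\omega}_{k-1}(tu)} \cdot \frac{\widetilde{\omega}_{k-1}(tu)}{\widetilde{\omega}_{k-1}(u)}
\]
and take the supremum over $u \in S$ to obtain $\widetilde{\omega}_k(st) \leq \widetilde{\omega}_k(s)\widetilde{\omega}_k(t)$.

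Parts (ii)--(iv) then cascade. For (ii), the bound already used in the finiteness argument \emph{is} (ii): applying submultiplicativity of the weight $\widetilde{\omega}_k$ from (i) inside the definition of $\widetilde{\omega}_{k+1}$ gives $\widetilde{\omega}_{k+1}(s) \leq \widetilde{\omega}_k(s)$. Part (iii) is immediate from (ii), since $\|f\|_{\widetilde{\omega}_k} \leq \|f\|_{\widetilde{\omega}_{k-1}}$ pointwise. For (iv), iterating (ii) gives $\widetilde{\omega}_k \leq \omega$, so $\|\cdot\|_{\widetilde{\omega}_k}$ is finite on $\ell^1(S,\omega)$; submultiplicativity under convolution then follows from the fact that $\widetilde{\omega}_k$ is a weight by (i), exactly as in the standard argument for weighted semigroup algebras.

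For (v), both limits exist by Fekete's lemma applied to $n \mapsto \log \widetilde{\omega}_k(s^n)$ and $n \mapsto \log \widetilde{\omega}_{k+1}(s^n)$, since both functions of $n$ are subadditive by (i). The inequality
\[
\lim_{n \to \infty} \widetilde{\omega}_{k+1}(s^n)^{1/n} \leq \lim_{n \to \infty} \widetilde{\omega}_k(s^n)^{1/n}
\]
is immediate from (ii). For the reverse inequality, I would specialize the sup in the definition of $\widetilde{\omega}_{k+1}$ by choosing $t = s^n$, which gives
\[
\widetilde{\omega}_{k+1}(s^n) \geq \frac{\widetilde{\omega}_k(s^{2n})}{\widetilde{\omega}_k(s^n)}.
\]
Taking $n$-th roots and letting $n \to \infty$, the right side tends to $r_k(s)^2 / r_k(s) = r_k(s)$, where $r_k(s) := \lim_n \widetilde{\omega}_k(s^n)^{1/n}$. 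This yields $r_{k+1}(s) \geq r_k(s)$, and combining the two inequalities finishes (v).

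The only step that is not a one-line application of the definition is the reverse inequality in (v); the small trick of choosing $t = s^n$ (rather than $t = s$) inside the sup is what makes both sides rescale the same way under $n$-th roots, and this is the only place where any real thought is needed.
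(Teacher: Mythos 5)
Your proposal is correct, and parts (i)--(iv) follow the paper's proof essentially verbatim: the same telescoping identity $\frac{\widetilde{\omega}_{k-1}(stu)}{\widetilde{\omega}_{k-1}(u)} = \frac{\widetilde{\omega}_{k-1}(s(tu))}{\widetilde{\omega}_{k-1}(tu)}\cdot\frac{\widetilde{\omega}_{k-1}(tu)}{\widetilde{\omega}_{k-1}(u)}$ for submultiplicativity, and the observation that (ii) is just the finiteness bound read off again. Part (v) is where you genuinely diverge. The paper restricts $\omega$ to the cyclic subsemigroup generated by $s$, forming $\omega_s(n)=\omega(s^n)$ and the auxiliary weight $\overline{\omega}_s(n)=\sup_k \omega_s(n+k)/\omega_s(k)$, then runs an $\epsilon$-argument: from $\omega_s(n+1)\leq\omega_s(1)\overline{\omega}_s(n)$ and $\omega_s(n)^{1/n}>\rho-\epsilon$ eventually, it deduces $\rho-\epsilon<\lim\overline{\omega}_s(n)^{1/n}\leq\lim\widetilde{\omega}_1(s^n)^{1/n}$. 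In effect the paper tests the supremum at $t=s$ (i.e.\ $k=1$), whereas you test it at $t=s^n$, getting $\widetilde{\omega}_{k+1}(s^n)\geq\widetilde{\omega}_k(s^{2n})/\widetilde{\omega}_k(s^n)$ and passing directly to the limit. Your diagonal choice eliminates both the auxiliary weight $\overline{\omega}_s$ and the $\epsilon$ bookkeeping, so it is the cleaner argument; the paper's version has the mild virtue of only ever invoking the existence of the single limit $\rho$ rather than manipulating a quotient of two convergent sequences. One small point you should make explicit, as the paper does: when $r_k(s)=0$ your final step reads $r_k(s)^2/r_k(s)$, which is $0/0$; you must dispose of that case separately (it is immediate, since (ii) gives $0\leq r_{k+1}(s)\leq r_k(s)=0$), and only then is the division by $r_k(s)>0$ in the limit legitimate. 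With that caveat recorded, the argument is complete.
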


\begin{proof}
  $(i)$ It is enough to prove that $\widetilde{\omega}_1$ is a weight. Let $s, t, u \in S$. Then
  $$\frac{\omega(stu)}{\omega(u)} = \frac{\omega(stu)}{\omega(tu)} \frac{\omega(tu)}{\omega(u)} \leq \widetilde{\omega}_1(s) \widetilde{\omega}_1(t).$$
 Since $u \in S$ is arbitrary, $\widetilde{\omega}_1(st) \leq \widetilde{\omega}_1(s) \widetilde{\omega}_1(t)$. Thus $\widetilde{\omega}_1$ is a weight on $S$.

  $(ii)$ Since $\widetilde{\omega}_k$ is a weight, $\frac{\widetilde{\omega}_k(su)}{\widetilde{\omega}_k(u)} \leq \widetilde{\omega}_k(s) \; (u \in S)$. Hence $\widetilde{\omega}_{k+1}(s) \leq \widetilde{\omega}_k(s)$.

  $(iii)$ This follows from $(\ref{15})$.

  $(iv)$ This is clear because $\widetilde{\omega}_k$ is a weight and $\widetilde{\omega}_k \leq \omega$ on $S$.

  $(v)$ We prove only for $k = 0$. Fix $s \in S$. Let $ \rho = \lim\limits_{n \rightarrow \infty} \omega(s^n)^{\frac{1}{n}}$ and $ \rho_1 = \lim\limits_{n \rightarrow \infty} \widetilde{\omega}_1(s^n)^{\frac{1}{n}}$. Then clearly $0 \leq \rho_1 \leq \rho$ because $\widetilde{\omega}_1 \leq \omega$ on $S$. If $\rho = 0$, then there is nothing to prove. So assume that $\rho > 0$. Define $\omega_s(n) = \omega(s^n) \; (n \in \mathbb{N})$. Clearly, $\omega_s$ is a weight on $\mathbb{N}$ and $\lim\limits_{n \rightarrow \infty} \omega_s(n)^{\frac{1}{n}} = \rho$. Let $0< \epsilon < \rho$ be arbitrary. Then there exists $n_0 \in \mathbb{N}$ such that

  \begin{eqnarray}\label{5}
  \rho - \epsilon &<& \omega_s(n)^{\frac{1}{n}} \quad (n \geq n_0).
  \end{eqnarray}

Let $\overline{\omega}_s(n) = \sup \{ \frac{\omega_s(n+k)}{\omega_s(k)} : k \in \mathbb{N} \} \; (n \in \mathbb{N})$. Then $\overline{\omega}_s$ is a weight on $\mathbb{N}$ and $\omega_s(n+1) \leq \omega_s(1) \overline{\omega}_s(n)$. By Inequality (\ref{5}), $(\rho - \epsilon)^{n+1}  < \omega_s(1) \overline{\omega}_s(n) \; (n \geq n_0)$. Thus  $\rho - \epsilon < \lim\limits_{n \rightarrow \infty} \overline{\omega}_s(n)^{\frac{1}{n}}$. Note that, for any $n \in \mathbb{N}$,

\begin{eqnarray*}
  \overline{\omega}_s(n) &=& \sup\limits_{k \in \mathbb{N}} \frac{\omega_s(n+k)}{\omega_s(k)} = \sup\limits_{k \in \mathbb{N}} \frac{\omega(s^n \cdot s^k)}{\omega(s^k)} \leq \sup\limits_{t \in S} \frac{\omega(s^n t)}{\omega(t)} = \widetilde{\omega}_1(s^n).
\end{eqnarray*}
Hence, we have $\rho - \epsilon < \lim\limits_{n \rightarrow \infty} \widetilde{\omega}_1(s^n)^{\frac{1}{n}} = \rho_1 $. This completes the proof.
\end{proof}

\begin{lem}\label{13}
Let $( A , \| \cdot \|)$ be a Banach algebra and $B$ be a dense subset of $ A $. Let $| \cdot |$ be another norm on $A$ such that $|\cdot| \leq \| \cdot \|$ on $A$. If $\| x \|_{op} = |x| \; (x \in B)$, then $\| \cdot \|_{op} = |\cdot|$ on $A$.
\end{lem}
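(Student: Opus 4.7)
The plan is to use a straightforward density/continuity argument: both functions $\|\cdot\|_{op}$ and $|\cdot|$ are continuous with respect to $\|\cdot\|$, and they agree on the dense subset $B$, so they must agree on all of $A$.

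First I would verify that $|\cdot|$ is $\|\cdot\|$-continuous on $A$. This is immediate: the triangle inequality gives $\bigl||a| - |b|\bigr| \leq |a-b| \leq \|a-b\|$ by the hypothesis $|\cdot| \leq \|\cdot\|$. Next I would check that the operator seminorm $\|\cdot\|_{op}$ is also $\|\cdot\|$-continuous. If $a, b \in A$ and $x \in A$ with $\|x\| \leq 1$, then $\|(a-b)x\| \leq \|a-b\| \|x\| \leq \|a-b\|$, so $\|a-b\|_{op} \leq \|a-b\|$. The reverse triangle inequality for $\|\cdot\|_{op}$ then yields $\bigl|\|a\|_{op} - \|b\|_{op}\bigr| \leq \|a-b\|_{op} \leq \|a-b\|$.

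Given an arbitrary $a \in A$, I would use density of $B$ to pick a sequence $(x_n) \subset B$ with $\|x_n - a\| \to 0$. By hypothesis, $\|x_n\|_{op} = |x_n|$ for each $n$. Passing to the limit, the left side tends to $\|a\|_{op}$ and the right side tends to $|a|$ by the two continuity statements above, so $\|a\|_{op} = |a|$.

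There is essentially no obstacle here; the lemma is a soft reduction tool. Its role in what follows is presumably to reduce verification of the identity $\|\cdot\|_{\omega op} = \|\cdot\|_{\widetilde{\omega}_1}$ on all of $\ell^1(S, \omega)$ to checking it on the dense subalgebra of finitely supported functions, where the computation becomes combinatorial.
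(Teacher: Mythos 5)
Your proof is correct, and since the paper's own proof of this lemma is just the remark ``It is easy,'' your continuity-plus-density argument is exactly the standard filling-in of the details the authors intended. Both continuity estimates and the limiting step are carried out properly, so nothing further is needed.
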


\begin{proof}
It is easy.
\end{proof}

\begin{defn}
A weight $\omega$ on $S$ has \emph{F-property} if, for every finite subset $\{t_1, \ldots, t_n\}$ of $S$ and any $r < 1$, there exists $s \in S$ such that $\frac{\omega(t_k s)}{\omega(s)} \geq r \widetilde{\omega}_1(t_k) \; (1 \leq k \leq n).$
\end{defn}

\noindent Every constant weight has F-property. If $S$ has an identity $e_s$ and $\omega(e_s) = 1$, then $\omega$ has F-property. So far we could not find any weight which does not satisfy the F-property. Next theorem is our main result.

\begin{thm}\label{17}
Let  $k \in \bZp$ and $\omega = \widetilde{\omega}_0$ be a weight on $S$. Then
\begin{enumerate}[{(i)}]
  \item $\|\delta_t\|_{\widetilde{\omega}_{k} op} = \|\delta_t\|_{\widetilde{\omega}_{k+1}} \; (t \in S).$ \label{22}
  \item $\|f\|_{\widetilde{\omega}_k op} \leq \|f\|_{\widetilde{\omega}_{k+1}} \; (f \in \ell^1(S,\omega))$. \label{18}
  \item \label{9} $\widetilde{\omega}_k$ satisfies F-property if and only if $\|f\|_{\widetilde{\omega}_k op} = \|f\|_{\widetilde{\omega}_{k+1}} \; (f \in \ell^1(S, \omega)).$
  \item \label{23} If $\widetilde{\omega}_k$ satisfies F-property and $\widetilde{\omega}_{k+1} = \widetilde{\omega}_k$ on $S$, then $\| \cdot \|_{\widetilde{\omega}_k}$ is a regular norm.
  \item The $\ell^1$- norm $\| \cdot \|_1$ on $\ell^1(S)$ is regular.
\end{enumerate}
\end{thm}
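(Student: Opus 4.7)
My strategy is to build everything up from the point-mass case, where the operator norm can be computed explicitly: I would prove (i) by direct calculation, lift it to (ii) via the $\ell^1$-expansion $f = \sum_s f(s)\delta_s$, use the F-property to close the gap between upper and lower operator-norm bounds in (iii), and then read off (iv) and (v) as immediate consequences. For (i) itself, the definition of $\widetilde{\omega}_{k+1}$ yields the pointwise estimate $\widetilde{\omega}_k(tv)\le \widetilde{\omega}_{k+1}(t)\widetilde{\omega}_k(v)$; expanding a general $g$ with $\|g\|_{\widetilde{\omega}_k}\le 1$ and applying the triangle inequality in $\ell^1$ gives $\|\delta_t\|_{\widetilde{\omega}_k \text{ op}} \le \widetilde{\omega}_{k+1}(t)$, while the matching lower bound is obtained by testing against $g = \delta_s/\widetilde{\omega}_k(s)$ and taking the supremum over $s$, which recovers $\widetilde{\omega}_{k+1}(t)$ exactly. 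Part (ii) is then the one-line $\ell^1$-triangle-inequality $\|f\|_{\widetilde{\omega}_k \text{ op}} \le \sum_s |f(s)|\,\|\delta_s\|_{\widetilde{\omega}_k \text{ op}} = \|f\|_{\widetilde{\omega}_{k+1}}$.

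For the forward direction of (iii), take a finitely supported $f = \sum_{i=1}^n a_i\delta_{t_i}$ with distinct $t_i$ and $r<1$, and apply the F-property of $\widetilde{\omega}_k$ to the set $\{t_1,\dots,t_n\}$ to produce $s \in S$ with $\widetilde{\omega}_k(t_is)/\widetilde{\omega}_k(s) \ge r\,\widetilde{\omega}_{k+1}(t_i)$ for each $i$. Right cancellativity keeps $t_1s,\dots,t_ns$ distinct, so testing against $g = \delta_s/\widetilde{\omega}_k(s)$ gives $\|f\|_{\widetilde{\omega}_k \text{ op}} \ge r\|f\|_{\widetilde{\omega}_{k+1}}$. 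Letting $r \to 1$ and combining with (ii) produces equality on the dense set of finitely supported functions, and Lemma~\ref{13} propagates it to all of $\ell^1(S,\widetilde{\omega}_k)$.

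The converse of (iii) is the main obstacle: from the operator-norm equality one must fabricate a single $s$ simultaneously witnessing all $n$ F-property inequalities. The plan is to test the hypothesis on $f = \sum_{i=1}^n \delta_{t_i}/\widetilde{\omega}_{k+1}(t_i)$, so that $\|f\|_{\widetilde{\omega}_{k+1}} = n$, and choose a near-optimizer $g = \sum_v b_v\delta_v$ with $\|g\|_{\widetilde{\omega}_k}\le 1$ and $\|f*g\|_{\widetilde{\omega}_k} > (1-\epsilon)n$. The bounds $\|\delta_{t_i}*g\|_{\widetilde{\omega}_k} \le \widetilde{\omega}_{k+1}(t_i)$ from (i), combined with an $n$-term pigeonhole on the sum, force $\|\delta_{t_i}*g\|_{\widetilde{\omega}_k} > (1 - n\epsilon)\widetilde{\omega}_{k+1}(t_i)$ for every $i$. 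Regarding $p_v = |b_v|\widetilde{\omega}_k(v)$ as a sub-probability on $S$ and setting $\beta_{i,v} = \widetilde{\omega}_k(t_iv)/\widetilde{\omega}_k(v) \le \widetilde{\omega}_{k+1}(t_i)$, a Markov-type estimate bounds the $p$-mass of each bad set $E_i = \{v : \beta_{i,v} < r\,\widetilde{\omega}_{k+1}(t_i)\}$ by $n\epsilon/(1-r)$. Choosing $\epsilon$ small enough (in terms of $n$ and the target parameter $r<1$) that $n^2\epsilon/(1-r)$ is strictly less than $\sum_v p_v$, a union bound produces $v \notin \bigcup_i E_i$ with $p_v > 0$; such a $v$ is the required F-property witness. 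The delicate point is the simultaneous balancing of the three parameters $\epsilon$, $r$, and the final F-property constant.

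Once (iii) is in hand, (iv) is immediate: the hypothesis $\widetilde{\omega}_{k+1} = \widetilde{\omega}_k$ combined with (iii) gives $\|f\|_{\widetilde{\omega}_k \text{ op}} = \|f\|_{\widetilde{\omega}_{k+1}} = \|f\|_{\widetilde{\omega}_k}$, so $\|\cdot\|_{\widetilde{\omega}_k}$ is regular. Finally, (v) is the instance of (iv) with $\omega \equiv 1$ and $k = 0$: the constant weight $1$ has F-property (as noted after the definition) and satisfies $\widetilde{\omega}_1 \equiv 1$ trivially.
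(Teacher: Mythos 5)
Your proposal is correct, and parts (i), (ii), the forward direction of (iii), and (iv)--(v) follow essentially the same route as the paper: explicit computation of $\|\delta_t\|_{\widetilde{\omega}_k\,op}$ by testing against normalized point masses, the $\ell^1$-expansion for (ii), the F-property witness $s$ combined with right cancellativity and $r\to 1$ on $c_{00}(S)$, and Lemma \ref{13} for density. Where you genuinely diverge is the converse of (iii). The paper argues by contraposition: assuming the F-property fails for some $\{t_1,\dots,t_n\}$ and $r<1$, it exhibits the concrete element $g=\sum_{i=1}^n\delta_{t_i}$ and shows $\|g\|_{\omega\,op}\le\alpha<\|g\|_{\widetilde{\omega}_1}$, where $\alpha=\max\{\sum_i\alpha_i\widetilde{\omega}_1(t_i):\alpha_i\in\{r,1\},\ \alpha_i=r\ \text{for some}\ i\}$ encodes the fact that every $s$ must violate at least one of the $n$ inequalities. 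You instead give a direct proof: starting from the norm equality, you take a near-optimizer $g$ for $f=\sum_i\delta_{t_i}/\widetilde{\omega}_{k+1}(t_i)$, use the bound from (i) together with a pigeonhole to force each $\|\delta_{t_i}\ast g\|_{\widetilde{\omega}_k}$ close to its maximum, and then a Markov-type estimate plus union bound on the sub-probability $p_v=|g(v)|\widetilde{\omega}_k(v)$ to extract a single $v$ witnessing all $n$ F-inequalities simultaneously (note you do need right cancellativity here to get $\|\delta_{t_i}\ast g\|_{\widetilde{\omega}_k}=\sum_v p_v\beta_{i,v}$, and the lower bound $\sum_v p_v>1-n\epsilon$ to make the union bound close). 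Both arguments are sound; the paper's is shorter and avoids any $\epsilon$-bookkeeping by producing one explicit element with a strictly smaller operator norm, while yours is quantitative and shows directly how a near-optimizing $g$ manufactures the F-property witness, which makes the equivalence feel less like a coincidence.
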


\begin{proof}We shall prove all these statements for $k = 0$, i.e., for $\omega = \widetilde{\omega}_0$.

$(i)$ \label{16} Fix $t \in S$. Let $g = \sum\limits_{s \in S}g(s) \delta_s \in \ell^1(S,\omega)$ such that $\|g\|_{\omega} \leq 1$.
Then
\begin{eqnarray*}
  \|\delta_t \ast g\|_{\omega} &=& \|\delta_t \ast \sum\limits_{s \in S} g(s) \delta_s\|_{\omega} = \|\sum\limits_{s \in S} g(s) \delta_{ts}\|_{\omega} \leq \sum\limits_{s \in S} |g(s)| \omega(ts)\\
    &=& \sum\limits_{s \in S} |g(s)| \omega(s) \frac{\omega(ts)}{\omega(s)} \leq \sum\limits_{s \in S} |g(s)| \omega(s) \widetilde{\omega}_1(t) \leq \widetilde{\omega}_1(t).
\end{eqnarray*}
Since $g$ is arbitrary, $\|\delta_t\|_{\omega op} \leq  \widetilde{\omega}_1(t) = \| \delta_t \|_{\widetilde{\omega}_1}$.
For the reverse inequality, let $s \in S$ and $\widetilde{\delta}_s= \frac{\delta_s}{\omega(s)}$. Then $\|\widetilde{\delta}_s\|_{\omega}=1$ and
\begin{eqnarray*}
  \|\delta_t\|_{\omega op} &\geq& \|\delta_t \ast \widetilde{\delta}_s\|_{\omega} = \frac{\|\delta_{ts}\|_{\omega}}{\omega(s)} = \frac{\omega(ts)}{\omega(s)}.
\end{eqnarray*}
Since $s \in S$ is arbitrary, $\|\delta_t\|_{\omega op} \geq \widetilde{\omega}_1(t) = \| \delta_t \|_{\widetilde{\omega}_1}$. Thus $(\ref{22})$ is proved.

$(ii)$ Let $f = \sum\limits_{s \in S} f(s) \delta_{s} \in \ell^1(S,\omega)$. Let $g \in \ell^1(S,\omega)$ such that $\|g\|_{\omega} \leq 1$. Then
\begin{eqnarray*}
  \|f \ast g\|_{\omega} &=& \|(\sum\limits_{s \in S} f(s) \delta_{s}) \ast g\|_{\omega} = \|\sum\limits_{s \in S} f(s) \delta_{s} \ast g\|_{\omega} \leq \sum\limits_{s \in S} |f(s)| \|\delta_{s} \ast g\|_{\omega} \\
    &\leq& \sum\limits_{s \in S} |f(s)| \|\delta_{s}\|_{\omega op} =  \sum\limits_{s \in S} |f(s)| \|\delta_{s}\|_{\widetilde{\omega}_1} \; (\text{By Statement } (i) \textrm{ above})\\
    &=& \sum\limits_{s \in S} |f(s)| \widetilde{\omega}_1(s) = \|f\|_{\widetilde{\omega}_1}.
\end{eqnarray*}
Since $\| g \|_{\omega} \leq 1$ is arbitrary, $\| f \|_{\omega op} \leq \|f\|_{\widetilde{\omega}_1}$. This proves $(\ref{18})$.

$(iii)$ Let $c_{00}(S) = \{ f \in \ell^1(S, \omega) : \textrm{supp} f = \{s : f(s) \neq 0 \} \textrm{ is finite}\}$. Then $c_{00}(S)$ is dense in $\ell^1(S, \omega)$. So, by Lemma \ref{13} and Statement $(\ref{18})$ above, it is sufficient to prove that
$$\|f\|_{\omega op} \geq \|f\|_{\widetilde{\omega}_1} \quad (f \in c_{00}(S)).$$
Let $f \in c_{00}(S)$ and supp$f = \{t_1, \ldots, t_n\}$. Let $r <1 $ be arbitrary. Since $\omega$ satisfies the F-property, there exists $s \in S$ such that
\begin{eqnarray} \label{4}
  \frac{\omega(t_k s)}{\omega(s)} &\geq& r \widetilde{\omega}_1(s) \quad (1 \leq k \leq n).
\end{eqnarray}
Let $\widetilde{\delta}_s = \frac{\delta_s}{\omega(s)}$. Then $\|\widetilde{\delta}_s\|_{\omega} = 1$ and
\begin{eqnarray}
\nonumber  \|f\|_{\omega op} &\geq& \|f \ast \widetilde{\delta}_s\|_{\omega} = \frac{1}{\omega(s)} \|(\sum\limits_{k=1}^{n}f(t_k)\delta_{t_k}) \ast \delta_s \|_{\omega} = \frac{1}{\omega(s)} \|\sum\limits_{k=1}^{n}f(t_k) \delta_{t_k s}\|_{\omega} \\
\nonumber   &=& \frac{1}{\omega(s)} \sum\limits_{k=1}^{n}|f(t_k)| \|\delta_{t_k s}\|_{\omega} \quad (\because S \text{ is right cancellative})\\
\nonumber   &=& \sum\limits_{k=1}^{n}|f(t_k)| \frac{\omega(t_k s)}{\omega(s)} \geq \sum\limits_{k=1}^{n}|f(t_k)| r \widetilde{\omega}_1(t_k) \quad (\text{ By Inequality (\ref{4})})\\
\nonumber   &=& r \|f\|_{\widetilde{\omega}_1}.
\end{eqnarray}
Since $r<1$ is arbitrary, we have $\|f\|_{\omega op} \geq \|f\|_{\widetilde{\omega}_1}$.

Conversely, assume that $\omega$ does not satisfy F-property. So there exists a finite set $\{t_1,...,t_n\} \subset S$ and $0< r <1$ such that there is no $s \in S$ such that $\frac{\omega(t_i s)}{\omega(s)} \geq r \widetilde{\omega}(t_i) \; \; (1 \leq i \leq n)$.
Take $g = \sum\limits_{i=1}^{n} \delta_{t_i}$ and $E_i = \{ s \in S : \frac{\omega(t_i s)}{\omega(s)} \geq r \widetilde{\omega}(t_i) \}$ for $1 \leq i \leq n$. By definition of $\widetilde{\omega}$, each set $E_{i}$ is non-empty. On the other hand, by the assumption, $\mathop{\cap}\limits_{i=1}^{n} E_i = \phi$. Let $\alpha = \max \{ \sum\limits_{i=1}^{n} \alpha_{i} \widetilde{\omega}(t_i) : \alpha_i \in \{ r, 1 \} \text{ and }\alpha_i = r \text{ for some } i, \; 1 \leq i \leq n \}$. Then $0 < \alpha < \sum\limits_{i=1}^{n} \widetilde{\omega}(t_i)$. Let $f = \sum\limits_{s \in S} f(s) \delta_s \in \ell^1(S, \omega)$ such that $\| f \|_{\omega} \leq 1$. Then
\begin{eqnarray*}
  \|g \ast f \|_{\omega} &=& \| \sum\limits_{i=1}^{n} \delta_{t_i} \ast f \|_{\omega}  \\
   & \leq & \sum\limits_{i=1}^{n} \|\delta_{t_i} \ast f \|_{\omega} \\
   & = & \sum\limits_{i=1}^{n} \| \sum\limits_{s \in S} f(s) \delta_{t_i s} \|_{\omega} \\
   & \leq & \sum\limits_{i=1}^{n}  \sum\limits_{s \in S} |f(s)| \| \delta_{t_i s} \|_{\omega} \\
   &=& \sum\limits_{i=1}^{n}  \sum\limits_{s \in S} |f(s)| \omega(s) \frac{\omega(t_i s)}{\omega(s)}\\
   & \leq & \sum\limits_{s \in S} |f(s)| \omega(s) \alpha \quad ( \because \mathop{\cap}\limits_{i=1}^{n} E_i = \phi)\\
   & \leq & \alpha \quad ( \because \| f \|_{\omega} \leq 1)
\end{eqnarray*}
Thus $\| g \|_{\omega op} \leq \alpha < \| g \|_{\widetilde{\omega}}$.

$(iv)$ It is straightforward from $(\ref{9})$ above.

$(v)$ The weight $\omega(s) = 1 \; (s \in S)$ satisfies the hypothesis of $(\ref{23})$ above.
\end{proof}

\noindent Following example exhibits that Theorem \ref{17}($\ref{9}$) is not true if $\omega$ does not satisfy F-property.
\begin{exam}
Let $\mathbb{N}_{\wedge} = \mathbb{N}$ with binary operation $m \wedge n = \min \{m, n\} \; (m, n \in \mathbb{N})$. Define

$\omega(n) =
\left\{
	\begin{array}{ll}
		1  & \mbox{if } n=2 \\
		2 & \mbox{if } n=4 \\
        4^n  & \mbox{if } otherwise.
	\end{array}
\right.$\\
Then $\omega$ does not satisfy F-property and $\|\delta_1 + \delta_3 \|_{\omega op} \neq \| \delta_1 + \delta_3 \|_{\widetilde{\omega}_1}$.
\end{exam}

\begin{sol}
It is clear that $\widetilde{\omega}_1(1)=4$ and $\widetilde{\omega}_1(3)=32$. Now let $r = \frac{1}{2}$. Then $r \widetilde{\omega}_1(1)=2$ and $r\widetilde{\omega}_1(3)=16$. Now
\begin{eqnarray*}
\frac{\omega(1 \wedge 2)}{\omega(2)}&=&4 > r \widetilde{\omega}_1(1) > 2 \geq \frac{\omega(1 \wedge n)}{\omega(n)} \; \; (n \neq 2) \; \; \text{and}\\
\frac{\omega(3 \wedge 4)}{\omega(4)}&=&8 > r \widetilde{\omega}_1(3) > 1 \geq \frac{\omega(3 \wedge n)}{\omega(n)} \; \; (n \neq 4).
 \end{eqnarray*}
Hence $\omega$ does not satisfy F-property. Now we claim that $\|\delta_1 + \delta_3 \|_{\omega op} < \| \delta_1 + \delta_3 \|_{\widetilde{\omega}_1}$. Let $f \in \ell^1(\mathbb{N}_{\wedge}, \omega)$ such that $\|f\|_{\omega} = |f(2)| + 2 |f(4)| + \sum\limits_{n \neq 2,4}^{\infty} 4^n |f(n)| \leq 1$.
\begin{eqnarray*}
  \|(\delta_1 + \delta_3) \ast f\|_{\omega} &=& |f(1) + \sum\limits_{n=1}^{\infty} f(n)|\omega(1) + |f(2)|\omega(2) + |\sum\limits_{n \geq 3} f(n)| \omega(3)  \\
   & \leq & \| f \|_{\omega} + \sum\limits_{n=1}^{\infty} |f(n)| \omega(1) + \sum\limits_{n=3}^{\infty}|f(n)|\omega(3) \\
   & \leq & \| f \|_{\omega} + |f(2)|\omega(1) + |f(4)|\omega(1) + \sum\limits_{n \neq 2,4}|f(n)|\omega(1) + \sum\limits_{n=3}^{\infty}|f(n)| \omega(3) \\
   & \leq & \| f \|_{\omega} + 4 |f(2)|\omega(2) + 4|f(4)|\omega(4)+ \sum\limits_{n \neq 2,4}|f(n)|\omega(1) + \sum\limits_{n=3}^{\infty}|f(n)| \omega(3) \\
   &=& \| f \|_{\omega} + 4 \| f \|_{\omega} + 4 \| f \|_{\omega} + \| f \|_{\omega} + \| f \|_{\omega} \leq 11 \quad ( \because \| f \|_{\omega} \leq 1)
\end{eqnarray*}
Since $\| f \|_{\omega} \leq 1$ is arbitrary, $\|\delta_1 + \delta_3\|_{\omega op} \leq 11$ and $\| \delta_1 + \delta_3 \|_{\widetilde{\omega}_1}=36$.
\end{sol}

A semigroup $S$ is an \emph{ordered semigroup} if there is a partial order $\leq$ on $S$ such that, for any $s, t \in S$ with $s \leq t$, we have $us \leq ut$ and $su \leq tu$ for all $u\in S$. The partial order $\leq$ on $S$ is a \emph{total order} if, further, for each pair $s, t \in S$, either $s \leq t$ or $t \leq s$ holds \cite[Definition.1.2.11]{Dal:00}.

\begin{cor}  \label{20} Let $\omega$ be a weight on $S$. Then
 \begin{enumerate}[(i)]
  \item If $S$ is totally ordered and the map $\eta_t : S \longrightarrow (0, \infty)$ defined as $\eta_t(s) = \frac{\omega(ts)}{\omega(s)}$ is increasing (respectively, decreasing) for each $t \in S$, then $\omega$ has F-property.
   \item Let $S$ be a dense subsemigroup of $\bRp^{\bullet}$ and let $\omega$ be a weight on $S$ such that $\omega(s) \leq \limsup_{t \rightarrow 0^+} \frac{\omega(s+t)}{\omega(t)} \; (s \in S)$ in the usual topology. Then $\widetilde{\omega}_1 = \omega$, i.e., $\|\cdot\|_{\omega}$ is a regular norm on $\ell^1(S, \omega)$. \label{21}
 \end{enumerate}
\end{cor}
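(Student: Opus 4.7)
For part~(i), the plan is to verify the F-property directly from the definition of $\widetilde{\omega}_1$ as a supremum, exploiting the total order on $S$. Given a finite set $\{t_1,\ldots,t_n\} \subset S$ and $r<1$, for each $k$ I pick $s_k \in S$ with $\eta_{t_k}(s_k) \geq r\,\widetilde{\omega}_1(t_k)$, which is possible since $\widetilde{\omega}_1(t_k) = \sup_{s \in S}\eta_{t_k}(s)$. Because $S$ is totally ordered, I may set $s := \max\{s_1,\ldots,s_n\}$ in the increasing case; monotonicity of each $\eta_{t_k}$ then gives $\eta_{t_k}(s) \geq \eta_{t_k}(s_k) \geq r\,\widetilde{\omega}_1(t_k)$ for every $k$, which is exactly the F-property. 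The decreasing case is dual with $s := \min\{s_1,\ldots,s_n\}$.

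For part~(ii), the first step is the identity $\widetilde{\omega}_1 = \omega$. Proposition~\ref{14}(\ref{15}) already gives $\widetilde{\omega}_1 \leq \omega$. For the reverse, the density of $S$ in $\bRp^{\bullet}$ lets me realize $\limsup_{t \to 0^+}\omega(s+t)/\omega(t)$ along values $t \in S$, so
\begin{equation*}
\widetilde{\omega}_1(s) \;=\; \sup_{t \in S}\frac{\omega(s+t)}{\omega(t)} \;\geq\; \limsup_{t \to 0^+}\frac{\omega(s+t)}{\omega(t)} \;\geq\; \omega(s)
\end{equation*}
by hypothesis. For the claim that $\|\cdot\|_\omega$ is regular, the natural route is Theorem~\ref{17}(\ref{23}), which combined with $\widetilde{\omega}_1 = \omega$ reduces everything to verifying the F-property for $\omega$.

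The F-property step in (ii) is where I anticipate the main obstacle: the limsup hypothesis yields, for each individual $t_k$, a sequence $s_m^{(k)} \to 0^+$ in $S$ with $\omega(t_k + s_m^{(k)})/\omega(s_m^{(k)}) \to \omega(t_k)$, but the F-property demands a \emph{single} $s \in S$ that is good for all $t_1,\ldots,t_n$ simultaneously. My intended line of attack is to apply the limsup hypothesis to $T := t_1 + \cdots + t_n \in S$, extract a single sequence $s_m \to 0^+$ with $\omega(T+s_m)/\omega(s_m) \to \omega(T)$, and then sandwich each $\omega(t_k+s_m)/\omega(s_m)$ between $\omega(t_k)$ from above (submultiplicativity) and the lower bound $\omega(T+s_m)/(\omega(T-t_k)\omega(s_m))$ coming from $\omega(T+s_m) \leq \omega(T-t_k)\omega(t_k+s_m)$. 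Letting $m \to \infty$ produces a liminf of at least $\omega(T)/\omega(T-t_k)$, and this is close to $\omega(t_k)$ precisely when the companion submultiplicativity inequality $\omega(T) \leq \omega(T-t_k)\omega(t_k)$ is tight. Controlling this slack uniformly in $k$ is the subtle point; I expect to have to refine the choice of $T$ using density of $S$, or to iterate the limsup hypothesis, to force both estimates to be nearly sharp on a common sequence.
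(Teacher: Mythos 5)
Part (i) of your proposal is correct and is essentially word-for-word the paper's argument: pick a witness $s_k$ for each $t_k$ from the supremum defining $\widetilde{\omega}_1(t_k)$, take $s=\max\{s_1,\ldots,s_n\}$ (resp.\ $\min$) using the total order, and invoke monotonicity of each $\eta_{t_k}$. Likewise the first half of your part (ii) is exactly the paper's: the chain $\omega(s)\leq \limsup_{t\to 0^+}\omega(s+t)/\omega(t)\leq \sup_{t\in S}\omega(s+t)/\omega(t)=\widetilde{\omega}_1(s)$, combined with the automatic inequality $\widetilde{\omega}_1\leq\omega$, gives $\widetilde{\omega}_1=\omega$.

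The divergence is in passing from $\widetilde{\omega}_1=\omega$ to regularity. The paper simply writes that $\widetilde{\omega}_1=\omega$ implies $\|f\|_{\omega\, op}=\|f\|_{\omega}$ and stops; it never verifies the F-property. You are right to be suspicious of this step: since Theorem \ref{17}(\ref{9}) is an equivalence, once $\widetilde{\omega}_1=\omega$ is known, regularity of $\|\cdot\|_{\omega}$ stands or falls \emph{exactly} with the F-property of $\omega$, and the limsup hypothesis only produces, for each $t_k$ separately, good witnesses $t\to 0^+$; a common witness is precisely what must be manufactured. However, your proposed repair via $T=t_1+\cdots+t_n$ does not close the gap. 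The lower bound it yields is $\omega(T)/\omega(T-t_k)$, and the slack in $\omega(T)\leq\omega(T-t_k)\,\omega(t_k)$ is in general a fixed constant that cannot be removed by perturbing $T$: for $\omega(s)=e^{-s^2}$ on $\bQp^{\bullet}$ (which satisfies the hypothesis of (ii)), one computes $\omega(T)/\omega(T-t_k)=\omega(t_k)\,e^{-2t_k(T-t_k)}$, which is bounded away from $\omega(t_k)$ by the factor $e^{-2t_kt_j}<1$ for any $j\neq k$; so for $r$ close enough to $1$ the argument fails. (For that particular weight the F-property does hold, but it follows from part (i), each $\eta_t$ being decreasing, not from your sandwich.) In short: your write-up of (ii) is incomplete at the F-property step, as you acknowledge; but this incompleteness is inherited from, and more honestly flagged than in, the paper itself, whose proof of Corollary \ref{20}(\ref{21}) asserts the regularity conclusion without supplying any argument for the F-property that its own Theorem \ref{17}(\ref{9}) makes necessary.
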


\begin{proof}
$(i)$ Assume that $\eta_t$ is increasing for each $t \in S$. Let $t_1, \ldots, t_n \in S$ and $0<r<1$. By the definition of $\widetilde{\omega}_1$, there exists $s_k \in S$ such that
\begin{equation}
  \eta_{t_k}(s_k) = \frac{\omega(t_k s_k)}{\omega(s_k)} \geq r \widetilde{\omega}_1(t_k) \quad (1 \leq k \leq n). \label{8}
\end{equation}
Set $s = \max \{s_1, \ldots, s_n\} \in S$. Then $s_k \leq s$ for each $k$. Since $\eta_{t_k}$ is increasing, we have $\eta_{t_k}(s_k) \leq \eta_{t_k}(s)$ for all $k$. Hence, by Inequality (\ref{8}), we have $$\frac{\omega(t_k s)}{\omega(s)} = \eta_{t_k}(s) \geq \eta_{t_k}(s_k) \geq r \widetilde{\omega}_1(t_k) \quad (1 \leq k \leq n).$$
Thus the weight $\omega$ has F-property.\\
If $\eta_t$ is decreasing, then take $s = \min \{ s_1, \ldots , s_n\}$ in the above proof.

  $(ii)$ Let $s \in S$. Note that $\widetilde{\omega}_1(s) \leq \omega(s)$ is always true. On the other hand,
$$\omega(s) \leq \limsup\limits_{t \rightarrow 0^+} \frac{\omega(s+t)}{\omega(t)} \leq \sup\limits_{t \in S} \frac{\omega(s+t)}{\omega(t)} = \widetilde{\omega}_1(s).$$ Hence $\widetilde{\omega}_1(s) = \omega(s)$. Thus $\|f\|_{\omega op} = \|f\|_{\omega} \; (f \in \ell^1(S, \omega))$. So $\| \cdot \|_{\omega}$ is regular.
\end{proof}

Finally, we note that the story for the Banach algebras $\ell_p(X, \omega)$ with pointwise product is totally different. First we define this Banach algebra. Let $X$ be any non-empty set. Let $\omega : X \longrightarrow [1, \infty)$ be any map. Let $1 \leq p < \infty$. Then the Banach space $\ell_p(X, \omega) = \{ f : X \longrightarrow \mathbb{C} : \| f \|_{p \omega} = \|f \omega\|_p < \infty\}$ is a commutative Banach algebra with respect to the pointwise product and the weighted norm $\| \cdot \|_{p \omega}$. It follows from the next result that the $\| \cdot \|_{p \omega}$ on $\ell^p$ is never regular.

\begin{thm}
Let $\ell_p(X, \omega)$ be as above. Then $\| f \|_{p \omega op} = \| f \|_{\infty} \; (f \in \ell_p(X, \omega))$. In particular, if $X$ is an infinite set, then $\| \cdot \|_{p \omega}$ is never regular.
\end{thm}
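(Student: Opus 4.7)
The plan is to reduce the weighted operator norm to the standard multiplier norm on unweighted $\ell^p$ via the substitution $h = g\omega$, and then use the fact that pointwise multipliers on $\ell^p$ are controlled by the sup norm.

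First I would unwind the definitions. For $f, g \in \ell_p(X, \omega)$, we have $\|fg\|_{p\omega} = \|fg\omega\|_p$. Writing $h = g\omega$, the correspondence $g \mapsto h$ is an isometry from $(\ell_p(X,\omega), \|\cdot\|_{p\omega})$ onto $(\ell_p(X), \|\cdot\|_p)$, and under this identification
\[
\|f\|_{p\omega op} \;=\; \sup\bigl\{\|fh\|_p : h \in \ell_p(X),\ \|h\|_p \leq 1\bigr\}.
\]
Thus the claim reduces to showing that the multiplier norm of $f$ on $\ell_p(X)$ equals $\|f\|_\infty$.

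Next I would prove the two inequalities. The upper bound $\|fh\|_p \leq \|f\|_\infty \|h\|_p$ is immediate from $|f(x)h(x)| \leq \|f\|_\infty |h(x)|$ and the definition of the $\ell^p$ norm, giving $\|f\|_{p\omega op} \leq \|f\|_\infty$. For the lower bound, fix $x_0 \in X$ and take $h = \delta_{x_0}$; this is a valid test function since $\delta_{x_0}/\omega(x_0) \in \ell_p(X, \omega)$ has weighted norm $1$. The computation $\|f \cdot (\delta_{x_0}/\omega(x_0))\|_{p\omega} = |f(x_0)|$ yields $\|f\|_{p\omega op} \geq |f(x_0)|$, and taking the supremum over $x_0$ gives $\|f\|_{p\omega op} \geq \|f\|_\infty$. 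I should also note in passing that $\|f\|_\infty$ is automatically finite for $f \in \ell_p(X,\omega)$: since $\omega \geq 1$, each term satisfies $|f(x)| \leq |f(x)|\omega(x) \leq \|f\|_{p\omega}$.

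Finally, for the second assertion, when $X$ is infinite (any $X$ with at least two points will do) I would exhibit an explicit $f$ for which $\|f\|_\infty < \|f\|_{p\omega}$. Pick two distinct points $x_1, x_2 \in X$ and let $f = \delta_{x_1}/\omega(x_1) + \delta_{x_2}/\omega(x_2)$. Then $\|f\|_{p\omega} = \|\delta_{x_1} + \delta_{x_2}\|_p = 2^{1/p}$, while $\|f\|_\infty = \max\{1/\omega(x_1), 1/\omega(x_2)\} \leq 1 < 2^{1/p}$. This exhibits strict inequality between $\|f\|_{p\omega op}$ and $\|f\|_{p\omega}$, so the norm is not regular. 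There is no real obstacle here; the only subtle point is spotting the isometric substitution $h = g\omega$, after which everything collapses to the well-known identification of the multiplier algebra of $\ell^p$ with $\ell^\infty$.
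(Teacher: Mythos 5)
Your proof is correct and follows essentially the same route as the paper: bound the multiplier norm above by $\|f\|_\infty$ and below by testing against the normalized point masses $\delta_{x_0}/\omega(x_0)$. The only (harmless) differences are that you argue directly for arbitrary $f\in\ell_p(X,\omega)$ instead of passing through $c_{00}(X)$ and a density lemma, and you refute regularity with a single two-point example (which in fact shows non-regularity whenever $|X|\geq 2$) where the paper uses the sequence $f_n=\sum_{k=1}^n\delta_k/\omega(k)$.
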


\begin{proof}
Let $f \in c_{00}(X)$ and $g \in \ell_p(X, \omega)$ with $\| g \|_{p \omega} \leq 1$. Then $\| f g \|_{p \omega} \leq \| f \|_{\infty}.$ Hence $\| f \|_{p \omega op} \leq  \| f \|_{\infty}$. For the reverse inequality, choose $x \in X$ such that $\| f \|_{\infty} = |f(x)|$. Let $\widetilde{\delta}_{x} = \frac{1}{\omega(x)} \delta_{x}$. Then $\|\widetilde{\delta}_{x}\|_{p \omega} = 1$ and $ \| f \|_{p \omega op} \geq \|f \cdot \widetilde{\delta}_{x}\|_{p \omega} = |f(x)| =\|f\|_{\infty}$. Thus $\| f \|_{p \omega op} = \|f\|_{\infty}$. By Lemma \ref{13}, $\| f \|_{p \omega op} = \|f\|_{\infty} \; (f \in \ell_p(X, \omega))$ because $c_{00}(X)$ is dense in $\ell_p(X, \omega)$ and $\| \cdot \|_{\infty} \leq \| \cdot\|_{p \omega}$. Let $X$ be an infinite set. Define $f_n = \sum\limits_{k=1}^{n} \frac{\delta_k}{\omega(k)}$. Then $\|f_n\|_{p \omega op} = \|f_n\|_{\infty} =1$ and   $\|f_n\|_{p \omega} =n$ for each $n \in \mathbb{N}$. Thus $\| \cdot \|_{p \omega}$ is not regular.
\end{proof}

\section{Examples of weights}
Now we give examples of weights having different properties. This should help us to understand the behaviour of the operator norm.\\
(1) Let $ \omega(s) = e^{-s^2} \; (s \in \bQp^{\bullet})$. By Corollary \ref{20}($\ref{21}$), $\|\cdot\|_{\omega}$ is a regular norm on $\ell^1(\bQp^{\bullet}, \omega)$. The same weight on $\mathbb{N}$ gives different result as in next Example.\\
(2) Let $\omega(n)= e^{-n^2} \; (n \in \mathbb{N})$. Then $\widetilde{\omega}_k(n) = e^{-n^2-2kn} \; (n \in \mathbb{N})$ and each $\widetilde{\omega}_k$ satisfies F-property. Hence, by Theorem \ref{17}($\ref{9}$), $\|f\|_{\widetilde{\omega}_k op} = \|f\|_{\widetilde{\omega}_{k+1}} \; (f \in \ell^1(\mathbb{N}, \omega))$. It is clear that $\ldots \lnapprox \| \cdot \|_{\widetilde{\omega}_{k+1}} \lnapprox \| \cdot \|_{\widetilde{\omega}_{k}}  \lnapprox \ldots \lnapprox \| \cdot \|_{\widetilde{\omega}_1} \lnapprox \| \cdot \|_{\omega}$. In particular, the norm $\| \cdot \|_{\omega}$ is not regular.\label{10}\\
(3) Let $\bQp^{\bullet}$ be the set of strictly positive rational numbers. Define $\omega(\frac{m}{n}) = n$ for $\frac{m}{n} \in \bQp^{\bullet}$ with $(m, n) = 1$, i.e., $m$ and $n$ are relatively prime. Then $\omega$ is a weight on $\bQp^{\bullet}$. Also $(pm+n, p) = 1$ for any prime number $p >n$. So that $\omega( \frac{m}{n} + \frac{1}{p}) = \omega(\frac{pm+n}{np}) = np$. Hence $\widetilde{\omega}_1(\frac{m}{n}) \geq \lim\limits_{ p \rightarrow \infty} \frac{\omega(\frac{m}{n} + \frac{1}{p})}{\omega(\frac{1}{p})} = n$. Thus $\widetilde{\omega}_1 = \omega$ and so $\| \cdot \|_{\omega}$ is a regular norm on $\ell^1(\bQp^{\bullet}, \omega)$.\\
(4) Let $\mathbb{N}_\wedge = \mathbb{N}$ with the binary operation $m \wedge n = \min \{m, n\}$. Then the norm $\| \cdot \|_1$ on $\ell^1(\mathbb{N}_\wedge)$ is regular. Infact, let $f = \sum\limits_{n=1}^{k} f(n) \delta_n \in c_{00}(\mathbb{N}_\wedge)$. Then $\|f\|_{1 op} \geq \| f \ast \delta_{k+1}\|_{1} = \| f \|_1$. But $\| f \|_{1 op} \leq \| f \|_1$ is always true. Now apply Lemma \ref{13}.\\
(5) Let $\mathbb{N}_l = \mathbb{N}$ with the binary operation $m \cdot n = m \; (m, n \in  \mathbb{N})$. Then $\mathbb{N}_l$ is not right cancellative. However, the norm $\| \cdot \|_1$ is regular on $\ell^1(\mathbb{N})$. Indeed, let $f \in \ell^1(\mathbb{N})$. Then $\|f\|_{1 op} \geq \| f \ast \delta_1  \|_1 = \|f\|_1$. But $\| f \|_{1 op} \leq \| f \|_1$ is always true.\\

\noindent
\textbf{Acknowledgement :} {The second author is thankful to the University Grants Commission (UGC), New Delhi, for providing Junior Research Fellowship.}

\end{document}